\newtheorem{theorem}{Theorem}[section]
\newtheorem{conjecture}[theorem]{Conjecture}
\newtheorem{lemma}[theorem]{Lemma}
\theoremstyle{definition}
\newtheorem{problem}[theorem]{Problem}
\title{Where have all the grasshoppers gone?}
\author{J\'anos Pach \thanks{R\'enyi Institute, Budapest and IST Austria. Research partially supported by National Research, Development and Innovation Office (NKFIH) grant K-131529 and ERC Advanced Grant ``GeoScape.'' Email: pach@cims.nyu.edu.} \and G\'abor Tardos \thanks{R\'enyi Institute, Budapest. Research partially supported by National Research, Development and Innovation Office (NKFIH) grants K-132696, SNN-135643 and ERC Advanced Grants `GeoSpace'' and ``ERMiD'' Email: tardos@renyi.hu. }}
\date{}
\begin{document}
\maketitle
\begin{abstract}
Let $P$ be an $N$-element point set in the plane. Consider $N$ (pointlike) grasshoppers sitting at different points of $P$. In a ``legal'' move, any one of them can jump over another, and land on its other side at exactly the same distance. After a finite number of legal moves, can the grasshoppers end up at a point set, similar to, but larger than $P$? We present a linear algebraic approach to answer this question. In particular, we solve a problem of Brunck by showing that the answer is yes if $P$ is the vertex set of a regular $N$-gon and $N\neq 3, 4, 6$. Some generalizations are also considered.
\end{abstract}

\section{Introduction.}

In 1994, at the Moscow Mathe\-matics Olym\-piad~\cite{MO}, the following question was asked: \emph{Four grasshoppers are sitting at the vertices of a square. Every minute, one of them jumps over another and lands at the point symmetric to it. Prove that the grasshoppers can never end up sitting at the vertices of a larger square.} The problem was suggested by A. K. Kovaldzhi.
\smallskip

The solution is simple. 
Suppose, for contradiction, that the grasshoppers can end up at the vertices of a \emph{larger} square. Since the moves are reversible, starting with the final position and reversing the sequence of moves, we can get from a square to a \emph{smaller} square. However, this is impossible. Indeed, we can assume by scaling that the initial positions of the grasshoppers are $(0,0), (0,1), (1,0),$ and $(1,1)$. Then they will keep jumping around on the points of the integer grid ${\mathbb{Z}}^2$, which has no four points that form a square of sidelength smaller than 1.
\smallskip

The same argument works when we have \emph{six} grasshoppers sitting at the vertices of a regular hexagon $T$. The only difference is that now the possible positions of the grasshoppers belong to a triangular lattice induced by two adjacent side vectors of $T$. As $T$ is the smallest regular hexagon in this lattice, the grasshoppers can never reach a regular hexagonal position of smaller---and, therefore, by reversibility---of larger side length than $T$. Exactly the same argument works if we start with a regular triangle (or with an arbitrary triangle). However, in this case, we have an even simpler argument: a legal move never changes the \emph{area} of the triangle determined by the three pieces. Therefore, after any number of steps, the positions of the pieces must form a triangle whose area is the same as that of the initial triangle.
\smallskip

As far as we know, it was Florestan Brunck~\cite{B} who first asked: what happens if there are $5$ grasshoppers, and their starting position is the vertex set of a regular pentagon? Can they be taken into the vertices of a larger regular pentagon?

Somewhat surprisingly, the answer to this question is affirmative.

\begin{theorem}\label{main}
Let $N\ge 3$, and let $C_N$ denote a configuration consisting of $N$ pieces sitting at the vertices of a regular $N$-gon. In a legal move, any piece can jump over any other piece and land at the point centrally symmetric about it.

Unless $N=3, 4,$ or $6$, there is a finite number of moves that take the pieces of $C_N$ into a strictly larger similar configuration.
\end{theorem}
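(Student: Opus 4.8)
The plan is to realize the desired larger $N$-gon as a \emph{dilate} $\varepsilon\cdot C_N$ of the original, where $\varepsilon$ is a unit of the cyclotomic ring $\mathbb Z[\zeta]$, $\zeta=e^{2\pi i/N}$, with $|\varepsilon|>1$; since multiplication by a complex number is a plane similarity, $\varepsilon\cdot C_N$ is automatically a regular $N$-gon, strictly larger than $C_N$. Identify the plane with $\mathbb C$, write $C_N=\{\zeta^k:0\le k<N\}$, and note that a legal move sends a piece at $p_i$ to $2p_j-p_i$, where $p_j$ is the position of another piece. Two elementary remarks are used throughout: moves are reversible (jump the same piece back over the same partner), so it suffices to reach \emph{any} strictly larger similar configuration; and two consecutive moves of one piece $i$, first over $p_b$ and then over $p_c$, translate $p_i$ by $2(p_c-p_b)$ while leaving every other piece fixed. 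Call this the ``double jump.''

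First I would pin down the conserved quantities. As $2p_j-p_i\in\mathbb Z[\zeta]$ whenever $p_i,p_j\in\mathbb Z[\zeta]$, all positions stay in $\mathbb Z[\zeta]$; the $\mathbb Z$-module $\Lambda$ generated by the pairwise differences of the positions is invariant (each new difference lies in the old $\Lambda$ and conversely), and for $C_N$ one computes $\Lambda=(\zeta-1)\mathbb Z[\zeta]$, a nonzero ideal of $\mathbb Z[\zeta]$; finally, a move changes the position of the jumping piece by $2(p_j-p_i)\in 2\Lambda$, so the residue of each $p_i$ modulo $2\Lambda$ is an invariant as well.

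Next comes the technical heart, a reachability lemma: for $N\ge3$, any two configurations of $N$ distinct points with the same difference module $\Lambda$ and, piece by piece, the same residues modulo $2\Lambda$ can be carried into one another by legal moves. I would prove this using the double jump, which adds to any single $p_a$ any element of twice the $\mathbb Z$-span of the current differences of the remaining pieces; passing to difference coordinates $d_i=p_i-p_0$, the moves realize the transvections $d_a\mapsto d_a\pm 2d_b$ together with the sign changes $d_a\mapsto -d_a$ and the shift moves coming from jumps of piece $0$. For $N$ prime the $d_i$ form a $\mathbb Z$-basis of $\Lambda$, and these transformations generate the congruence subgroup $\{M\in GL_{N-1}(\mathbb Z):M\equiv I\pmod2\}$ by the standard generation theorem for congruence subgroups in rank $\ge3$ (after, if necessary, squaring $\varepsilon$ to make the relevant determinant $+1$); the non-prime $N$ are handled analogously with the over-generating set $d_1,\dots,d_{N-1}$ of $\Lambda$. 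The one real nuisance is that all intermediate positions must be pairwise distinct; I would handle this by noting that at every micro-step only finitely many values of a double jump's displacement are forbidden, so a collision can always be routed around (for instance by translating the offending piece out of the way and restoring it later). I expect making this lemma — and especially the collision bookkeeping — fully rigorous to be the main obstacle.

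Finally I would construct $\varepsilon$. By Dirichlet's unit theorem, the unit group of $\mathbb Z[\zeta]$, a totally complex field of degree $\phi(N)$, has rank $\phi(N)/2-1$, which is positive exactly when $\phi(N)\ge4$, that is, exactly when $N\notin\{1,2,3,4,6\}$. So for $N\ne 3,4,6$ there is a unit $\varepsilon_0$ of infinite order; since the lattice of log-absolute-value vectors of units is not contained in any coordinate hyperplane, we may take $\varepsilon_0$ with $|\varepsilon_0|>1$ under the embedding $\zeta\mapsto e^{2\pi i/N}$. Letting $\varepsilon=\varepsilon_0^{\,m}$ with $m$ the order of $\varepsilon_0$ in the finite unit group $(\mathbb Z[\zeta]/2\Lambda)^{\times}$, we get $|\varepsilon|>1$ and $\varepsilon\equiv1\pmod{2\Lambda}$. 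Then $\varepsilon\cdot C_N=\{\varepsilon\zeta^k\}$ is a regular $N$-gon of side $|\varepsilon|>1$ times that of $C_N$; its difference module is $\varepsilon(\zeta-1)\mathbb Z[\zeta]=(\zeta-1)\mathbb Z[\zeta]=\Lambda$, and $\varepsilon\zeta^k-\zeta^k=(\varepsilon-1)\zeta^k\in2\Lambda$ for all $k$ (as $\Lambda$ is an ideal), so $C_N$ and $\varepsilon\cdot C_N$ have identical invariants. The reachability lemma now yields a finite sequence of legal moves taking $C_N$ to $\varepsilon\cdot C_N$. (The same bookkeeping shows why $N=3,4,6$ are genuine exceptions: there every unit has absolute value $1$, so no strictly larger similar copy of $C_N$ shares its difference module, hence none is reachable.)
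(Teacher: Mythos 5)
Your strategy is sound and is, at bottom, a number--theoretic rephrasing of what the paper does: your two invariants (the difference module $\Lambda$ and the residues mod $2\Lambda$) are the paper's Lemmas~\ref{matrix} and~\ref{altalanos} in disguise; your replacement of $\varepsilon_0$ by $\varepsilon_0^{\,m}$ with $m$ its order in $(\mathbb{Z}[\zeta]/2\Lambda)^{\times}$ is exactly the paper's trick of raising the scaling matrix to its order in $GL(n,2)$; and the paper's matrix $B_i=\sum_{j=0}^{i-1}M^j$ is precisely the cyclotomic unit $(\zeta^i-1)/(\zeta-1)$ that Dirichlet hands you abstractly. Two smaller points: the collision bookkeeping you fear is a non-issue, since the rules as needed here (and as set up in the paper) permit coincident pieces; and your appeal to ``the standard generation theorem for congruence subgroups'' is loose --- for a general level the $E_{ij}(m)$ alone do not generate $\Gamma(m)$ --- but at level $2$, with the sign changes included, the needed generation statement is true and has an elementary inductive proof (this is the paper's Lemma~\ref{matrix}).

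The genuine gap is the composite case of your reachability lemma. When $N$ is not prime, $d_1,\dots,d_{N-1}$ generate $\Lambda\cong\mathbb{Z}^{\phi(N)}$ but are not a basis, and ``handled analogously with the over-generating set'' hides the real difficulty. What you must produce is an integer matrix $A$ with $A\equiv I\pmod 2$, $|\det A|=1$, and $(\varepsilon d_1,\dots,\varepsilon d_{N-1})=(d_1,\dots,d_{N-1})A$. Matching residues yields an $A\equiv I\pmod 2$ with the right action but no control over $\det A$; matching difference modules yields an $A\in GL_{N-1}(\mathbb{Z})$ with the right action but no control mod $2$; and the two cannot be merged for free, because $A$ is determined only up to adding columns in the kernel of $P$, and such changes alter the determinant. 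Concretely, if you write $\varepsilon=f(\zeta)$ and take $A=f(M)$ for the companion-type matrix $M$, then $\det f(M)=\prod_{d\mid N,\,d>1}N_{\mathbb{Q}(\zeta_d)/\mathbb{Q}}\bigl(f(\zeta_d)\bigr)$, and Dirichlet's theorem controls only the factor $d=N$: the values of $f$ at the imprimitive roots of unity depend on the chosen representative $f$ and need not be units. The lemma you want is in fact true --- one can complete the forced ``bottom block'' of $A$ to a unimodular matrix with prescribed reduction mod $2$, using that every surjection $\mathbb{Z}^{N-1}\to\Lambda$ splits --- but that is a real additional argument you have not supplied. The paper sidesteps it entirely by choosing the unit explicitly as $(\zeta^i-1)/(\zeta-1)$ with $\gcd(i,N)=1$, which is simultaneously a unit at \emph{every} $N$-th root of unity other than $1$, so that $|\det B_i|=1$ can be read off directly from the existence of an integer inverse.
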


One can raise the following more general question. Given any two $N$-element point sets in the plane or in higher dimensions, is it possible to transform the first one to the second by a sequence of legal moves? In Section~\ref{sec3}, we develop a linear algebraic approach to answer this question.
\smallskip

Throughout this note, we fix an orthogonal system of coordinates in ${\mathbf{R}}^d$. Let $P\subset {\mathbf{R}}^d$ be a configuration of $N=n+1$ points.
The position of each piece can be described by a \emph{column vector} of length $d$. Assume without loss of generality that initially one of the pieces is at the origin $0$. We distinguish this piece and call it \emph{stationary}. All other pieces are said to be \emph{ordinary}. The movement of the stationary piece is artificially restricted: any ordinary piece is allowed to jump over it, but this piece is not allowed to jump. Note that it is not forbidden for two or more pieces to sit at the same point at the same time.
\smallskip

Enumerate the elements of $P$ by the integers $0, 1, 2,\ldots, n$, where $0$ denotes the stationary piece. We identify $P$ with the $d\times n$ real matrix, whose $i$\/th column gives the position of the ordinary piece $i$ for $1\le i\le n$. Without any danger of confusion, we denote both the configuration and the corresponding matrix by $P$. Part 1 of Lemma~\ref{altalanos}, stated in the next section, characterizes all configurations $P$ that can be reached from the initial position under the restriction that piece $0$ is stationary. In part 2, we get rid of this unnatural condition.
\smallskip

In the special case where we want to transform $P$ into a larger configuration similar to it, we obtain the following result.

\begin{theorem}\label{hasonlo}
Let $P$ be a configuration of $N=n+1$ points in ${\mathbf{R}}^d$, one of which initially sits at the origin but it is still allowed to jump. Let $P$ also denote the $d\times n$ matrix associated with it.

Then $P$ can be transformed into a similar but larger configuration, using legal moves, if and only if there is an $n\times n$ integer matrix $A$ with $|\det A|=1$ such that the configuration associated with $PA$ is similar to and larger than the initial configuration $P$.
\end{theorem}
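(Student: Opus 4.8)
The plan is to derive both directions from Lemma~\ref{altalanos}, which converts ``reachable by legal moves'' into the algebraic statement ``obtainable from $P$ by right multiplication by an integer matrix of determinant $\pm1$.'' The only substantive work is to reconcile the geometric words \emph{similar} and \emph{larger} with this matrix language.

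For the ``only if'' direction, note that a single legal move replaces the $d\times n$ matrix $X$ of a configuration by $XE$, where $E$ agrees with the identity except in the column indexed by the jumping piece $i$: that column becomes $2e_j-e_i$ if $i$ jumps over the ordinary piece $j$, and $-e_i$ if $i$ jumps over the piece at the origin. In both cases $E$ is integral with $\det E=-1$. Composing the moves that carry $P$ to a similar, larger configuration $Q$ yields $Q=PB$ with $B$ a product of such matrices, hence integral with $|\det B|=1$; packaging this --- together with the bookkeeping needed because the piece that started at the origin has moved and because a similarity may relabel the pieces --- is exactly what Lemma~\ref{altalanos} does, and I quote it. Concretely, translating $Q$ so that one of its points returns to the origin, or relabeling the pieces, only multiplies $B$ on the right by a permutation matrix or by a ``change of base point'' matrix, each integral of determinant $\pm1$; so $A:=B$ is the required matrix.

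For the ``if'' direction, suppose $A$ is integral with $|\det A|=1$ and the configuration of $PA$ is similar to $P$ with ratio $\lambda>1$. First normalize: the similarity taking the configuration of $P$ to that of $PA$ may move the origin and permute the pieces, but correcting for this --- translating $PA$ and multiplying $A$ on the right by a change of base point matrix and a permutation matrix, both integral of determinant $\pm1$ --- one may assume it is linear and label preserving, i.e.\ $PA=\lambda UP$ for an orthogonal $U$, with $A$ still integral, $|\det A|=1$. Then $PA^{m}=\lambda^{m}U^{m}P$ for every $m\ge1$, so the configuration of $PA^{m}$ is again similar to $P$ and strictly larger. By Lemma~\ref{altalanos}, the configurations reachable from $P$ are, up to translation, exactly those of the form $PB$ with $B$ in a subgroup $\Gamma\le\mathrm{GL}_n(\mathbb{Z})$; and $\Gamma$ is of finite index, since by part~1 of the lemma it contains the subgroup of integer matrices congruent to the identity modulo $2$, which has finite index because $\mathrm{GL}_n(\mathbb{Z}/2\mathbb{Z})$ is finite. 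Taking $m$ to be the order of the image of $A$ in the finite group $\mathrm{GL}_n(\mathbb{Z})/\Gamma$, we get $A^{m}\in\Gamma$, so $PA^{m}$ is reachable by legal moves --- and it is similar to, and strictly larger than, $P$.

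The step I expect to be the real obstacle is this normalization: turning ``the configuration of $PA$ is \emph{similar} to $P$'' into the clean identity $PA=\lambda UP$ with $A$ still an integer matrix of determinant $\pm1$. One must check that the translation and relabeling corrections are realized by integer matrices of determinant $\pm1$; once that is done, $A$ is back in $\mathrm{GL}_n(\mathbb{Z})$ and the finiteness of the index of $\Gamma$ makes the passage to powers automatic. If part~2 of Lemma~\ref{altalanos} in fact identifies the reachable configurations with \emph{all} of $\{PB:B\in\mathrm{GL}_n(\mathbb{Z})\}$ up to translation, then even the power trick is unnecessary and the theorem is immediate.
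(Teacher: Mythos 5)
Your proposal is correct and follows essentially the same route as the paper: both directions reduce to Lemma~\ref{altalanos}, and the ``if'' direction is rescued by passing to the power $A^{m}$, where $m$ is chosen so that $A^{m}$ is congruent to the identity modulo $2$ (the paper takes $m$ to be the order of $A$ in the finite group $GL(n,2)$, which is the same device as your finite-index quotient). One citation needs fixing: the crucial containment --- that the group $\Gamma$ generated by the elementary involutions contains every integer matrix of determinant $\pm1$ congruent to $I$ modulo $2$ --- is the ``if'' direction of Lemma~\ref{matrix}, not a consequence of part~1 of Lemma~\ref{altalanos}, which only identifies the reachable positions with $P\Gamma$ without describing $\Gamma$. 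Also, the answer to your closing question is no: by the ``only if'' direction of Lemma~\ref{matrix}, every reachable $B$ is congruent to $I$ modulo $2$, so $\Gamma$ is a proper (finite-index) subgroup of $GL_n(\mathbb{Z})$ and the power trick cannot be dispensed with; your normalization worries are likewise moot under the paper's convention that ``similar and larger'' means the matrix equals $\lambda SP$ with $S$ orthogonal and $|\lambda|>1$.
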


Our paper is organized as follows. In Section~\ref{sec2}, we prove a simple lemma for matrices. It is instrumental for our general linear algebraic approach to the reconfiguration problem, presented in Section~\ref{sec3}. In Section~\ref{sec4new}, we prove our main theorems. First we establish Theorem~\ref{hasonlo}, and then we show how to deduce from it Theorem~\ref{main}. In the last section, we mention some open problems and make a few remarks.
\smallskip

The problem discussed in our paper is broadly related to several topics in pure and applied mathematics and computer science. For instance, the theory of \emph{discrete dynamical systems} deals with models in which changes occur in discrete time steps, according to a fixed rule; see~\cite{Ga}. A \emph{cellular automaton} consists of an array of cells, each of which can be in a finite number of different states. The states get updated depending on the neighborhoods of the cells~\cite{Sch, Wo}. Perhaps the most famous cellular automaton is \emph{Conway's game of life}~\cite{JoG}. These models are mostly deterministic. Solitary games in which the player can choose one from several possible legal moves, are studied in the framework of \emph{combinatorial game theory}~\cite{AlNW, BeCG}. Similar models are analyzed in \emph{motion planning} and \emph{robotics}, related to the reconfiguration of rotating or sliding systems of modules~\cite{DuSY}.

\section{Admissible transformations of matrices.}\label{sec2}

We follow the notation introduced above: A configuration $P$ consisting of $n$ ordinary pieces and a stationary piece at the origin in ${\mathbf{R}}^d$ is identified with a $d\times n$ matrix, also denoted by $P$, the columns of which correspond to the positions of the ordinary pieces.
\smallskip

The legal move by which the ordinary piece $i$ ($1\le i\le n$) jumps over piece $j$ ($0\le j\le n$ and $i\ne j$) brings configuration $P$ to $PA_{ij}$, where $A_{ij}=(a^{(ij)}_{kl})_{k,l=1}^n$ is the $n\times n$ real matrix defined as follows.
\[a^{(ij)}_{kl}=\left\{\begin{array}{ll}
1&\text{for }k=l\ne i\\
-1&\text{for }k=l=i\\
2&\text{for }k=j,l=i\\
0&\text{otherwise.}
\end{array}\right.\]
For example, for $n=4$, we have
\[A_{2,4}=\begin{bmatrix}
1&0&0&0\\
0&-1&0&0\\
0&0&1&0\\
0&2&0&1
\end{bmatrix}\quad\quad  \mbox{and} \quad\quad
A_{2,0}=\begin{bmatrix}
1&0&0&0\\
0&-1&0&0\\
0&0&1&0\\
0&0&0&1
\end{bmatrix},\]
where the first matrix corresponds to the jump of piece $2$ over piece $4$, and the second matrix to the jump of $2$ over the stationary piece $0$. Indeed, denoting the position vectors of $2$ and $4$ by $v_2$ and $v_4\in{\mathbf{R}}^d$, respectively, after the first move, the new position of $2$ will be $v_4 + (v_4-v_2)=-v_2+2v_4$. This change of coordinates is reflected by the second column of $A_{2,4}$.
\smallskip

We call the matrices $A_{ij}$ for $1\le i\le n$ and $0\le j\le n$, $i\ne j$, \emph{elementary involutions}. (Note that they are indeed involutions, that is, $A_{ij}^2=I$ holds, reflecting the fact that all legal moves are reversible. Here $I$ stands for the identity matrix.) The positions reachable from an initial position $P$ (with the piece at the origin remaining stationary) are precisely the configurations $PA$, where $A$ belongs to the \emph{matrix group} generated by the \emph{elementary involutions}.
\smallskip

Our first lemma  gives a simple characterization of this matrix group. It will be used in the next section. We write $\det A$ for the determinant of the square matrix $A$.

\begin{lemma}\label{matrix}
An $n\times n$ integer matrix $A=(a_{ij})$ is generated by elementary involutions if and only if
\begin{enumerate}
\item[(i)] $|\det A|=1$;
\item[(ii)] $a_{ij}$ is \emph{even} if $i\neq j$ and it is \emph{odd} if $i=j$.
\end{enumerate}
\end{lemma}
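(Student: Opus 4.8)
The plan is to prove the two implications separately; the forward implication is routine, the converse is the substance.

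\emph{The ``only if'' direction.} I would first compute $\det A_{ij}=-1$ for every elementary involution: expanding along the $i$-th column, the entry $-1$ at position $(i,i)$ contributes with cofactor $1$, while the entry $2$ at position $(j,i)$ contributes $0$, since deleting row $j$ and column $i$ leaves a matrix with an all-zero column. As the determinant is multiplicative, every element of the group generated by the $A_{ij}$ has determinant $\pm 1$, which is (i). For (ii), note that reducing modulo $2$ turns $-1$ into $1$ and $2$ into $0$, so $A_{ij}\equiv I\pmod 2$; hence every product of elementary involutions is $\equiv I\pmod 2$, which is exactly the parity condition (ii). It is worth recording that conditions (i)--(ii) together describe a \emph{group} $H$ of integer matrices --- indeed (ii) already forces $\det A$ to be odd, so $H=\{A\in GL_n(\mathbf{Z}):A\equiv I\pmod 2\}$ --- because this group structure is what keeps each step of the reduction in the converse direction legitimate.

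\emph{The ``if'' direction.} Let $A\in H$. First I record which matrix operations are realized by products of elementary involutions (each of which is its own inverse): a short computation gives $A_{l0}A_{lk}=I+2e_{kl}$, so together with the sign changes $D_i:=A_{i0}$ we obtain every ``even transvection'' $I+2m\,e_{kl}$ with $k\neq l$ in $\{1,\dots,n\}$. Left-multiplying by these therefore realizes all row operations $R_i\mapsto R_i+2m\,R_j$ and $R_i\mapsto -R_i$, and right-multiplying realizes the corresponding column operations. Now I induct on $n$. The case $n=1$ is immediate, since $H=\{[1],[-1]\}$ and $[-1]=A_{1,0}$. For $n\ge 2$: using row operations, reduce the first column of $A$ to $\pm e_1$; apply $D_1$ if needed to make the top-left entry $+1$; then, since $A$ still lies in $H$ and so the remaining entries of its first row are even, clear the rest of the first row with the column operations $C_j\mapsto C_j+2m\,C_1$, $j\ge 2$. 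This brings $A$ to block form $\begin{pmatrix}1 & 0\\ 0 & A'\end{pmatrix}$ with $A'\in H$ of size $n-1$. Since an $(n-1)$-dimensional elementary involution, acting on coordinates $2,\dots,n$, is literally an $n$-dimensional elementary involution (of the configuration with one extra piece placed at the origin), the inductive hypothesis finishes the argument.

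The only step requiring care is the reduction of the first column $v=(a_{11},\dots,a_{n1})^{\!\top}$ to $\pm e_1$ using only \emph{even} off-diagonal row operations and sign flips. Here one exploits two invariants of these operations --- $a_{11}$ stays odd (hence nonzero) and $a_{i1}$ stays even for $i\ge 2$ --- together with $\gcd(a_{11},\dots,a_{n1})=1$, a consequence of $\det A=\pm 1$. A Euclidean algorithm then works with the potential $\sum_i|a_{i1}|$: repeatedly pick a nonzero entry $a_{p1}$ of least absolute value and reduce every other entry modulo $2a_{p1}$; since each reduced entry has absolute value at most $|a_{p1}|$ while its parity is unchanged, the potential strictly decreases unless $v=\pm e_1$ already. (Over $\mathbf{Z}^2$ one cannot in general reduce a primitive vector to $\pm e_1$ by even operations and sign flips --- a vector congruent to $(1,1)$ modulo $2$ is trapped --- but in our situation $a_{11}$ is odd and the other entries even, so this obstruction is absent.) I expect this parity bookkeeping, and the easier analogue for clearing the first row, to be the only places where one must be attentive; everything else is formal.
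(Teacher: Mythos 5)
Your proof is correct and follows essentially the same strategy as the paper's: realize the elementary involutions as sign changes and ``even'' elementary operations, run a parity-preserving Euclidean algorithm (using $|\det A|=1$ and the invariance of the parities) to clear one line of the matrix, and induct on $n$. The only cosmetic differences are that you also use left multiplication (row operations) and reduce the first column rather than the last row, and that you rule out a non-trivial stuck state by the parity/gcd invariants rather than by the divisibility of the determinant; the underlying argument is the same.
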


\begin{proof}
All elementary involutions have determinant $-1$ and all are equal to the identity matrix $I$ modulo $2$. Conditions (i) and (ii) are closed under taking products. That is, if $A$ and $B$ meet them, then so does $AB$. This proves the ``only if'' part of the lemma.
\smallskip

It is instructive to think of the elementary involutions as column operations: For $1\le i\le n$ multiplying by $A_{i0}$ from the right is the same as multiplying column $i$ by $-1$. If we further have  $1\le j\le n$ and $j\ne i$, then multiplying by $A_{ij}A_{i0}$ is the same as \emph{subtracting} twice column $j$ from column $i$, while multiplying by $A_{i0}A_{ij}$ is the same as \emph{adding} column $j$ twice to column $i$. We call these column operations \emph{admissible}.

We prove the ``if'' part of the lemma by showing that any matrix satisfying conditions~(i) and (ii) in the lemma can be transformed into the identity matrix $I$ by admissible operations. We do this by induction on $n$. For $n=1$, the statement is trivially true. Suppose now that $n>1$ and that our statement holds for $n-1$. Let $A$ be an $n\times n$ matrix satisfying conditions (i) and (ii).
\smallskip

Consider the last row of $A$. As long as we can find two entries $a_{ni}$ and $a_{nj}$ with $|a_{ni}|>|a_{nj}|>0$, by subtracting or adding twice the $j$th column to the $i$th column, we can reduce the value of $|a_{ni}|$. In this step, no other entry of the last row changes. Hence, the sum of the absolute values of the elements of the last row strictly decreases. After a finite number of such steps (admissible operations), we will get stuck. At this point, every entry of the last row is either $0$ or $\pm a$ for some integer $a>0$. During this process, the absolute value of the determinant of the matrix has remained $1$. Now it is divisible by $a$, so we must have $a=1$. Since the parities of the elements have not changed, all entries of the last row must be $0$, with the exception of the last element, which is either $+1$ or $-1$. Applying a single admissible transformation if necessary, we can assume that this last element is $+1$.
\smallskip

Let $B$ denote the matrix constructed so far, and let $B'$ be the $(n-1)\times(n-1)$ submatrix of $B$, obtained by deleting its last column and last row. Expanding the determinant of $B$ along its last row, we get that $|\det B|=|\det B'|=1$. Clearly, $B'$ also satisfies condition (ii) of the lemma. Thus, we can apply the induction hypothesis to $B'$, and conclude that $B'$ can be transformed to an $(n-1)\times(n-1)$ identity matrix by performing a number of admissible transformations. These transformations can, of course, be extended to the whole matrix $B$. During this process, the last row of $B$ remains the same: all of its entries, except the last one, remain $0$. Its last column also remains unchanged.
\smallskip

The matrix obtained so far coincides with the identity matrix $I$, apart from the entries of its last column that are arbitrary even integers, except for the last value of the last column, which is equal to $1$.. We can make them vanish by adding or subtracting each of the first $n-1$ columns by an even number of times. Thus, by a sequence of admissible transformations, $A$ can be transformed into $I$, as required.
\end{proof}

\section{Char\-ac\-ter\-i\-za\-tion of all reach\-able po\-si\-tions.}\label{sec3}
Our next state\-ment gives a complete linear algebraic description of all configurations that can be obtained from a given initial position of the pieces by a sequence of legal moves.

\begin{lemma}\label{altalanos}
Let $P$ be a $d\times n$ matrix associated with the initial configuration of the $n+1$ pieces in ${\mathbf{R}}^d$, one of them in the origin.
\begin{enumerate}
\item
The configurations that can be reached by a sequence of legal moves that keep the stationary piece at the origin, are exactly the positions corresponding to a matrix $PA$, where $A$ is a matrix generated by the elementary involutions.
\item
If we drop the assumption that the stationary piece must stay at the origin and allow it to jump over any of the remaining $n$ pieces, every reachable configuration is a translate of some configuration that can be reached without moving the stationary piece. Moreover, a translate of such a configuration is reachable if and only if the translation vector is of the form $2Pw$ for an integer vector $w$.
\end{enumerate}
\end{lemma}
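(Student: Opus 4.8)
The plan is to prove Lemma~\ref{altalanos} in two parts, using Lemma~\ref{matrix} as the engine for Part~1.

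\medskip

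\textbf{Part 1.} Here the work is essentially bookkeeping. A legal move in which ordinary piece $i$ jumps over piece $j$ sends the configuration matrix $P$ to $PA_{ij}$; hence a sequence of legal moves that keep piece $0$ fixed at the origin sends $P$ to $PA$, where $A$ is a product of elementary involutions. By definition these are exactly the elements of the matrix group generated by the elementary involutions, so the set of reachable configurations is $\{PA : A$ generated by elementary involutions$\}$. (If one wants an intrinsic description rather than a generated-group description, one invokes Lemma~\ref{matrix}: $A$ ranges over all integer matrices with $|\det A|=1$ that are congruent to $I$ modulo $2$.) I expect no obstacle here beyond being careful that the composition of moves corresponds to right multiplication in the correct order.

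\medskip

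\textbf{Part 2.} Now the stationary piece is allowed to jump. The first observation is that the group of allowed transformations is unchanged \emph{as acting on the shape}, but it now also acts by translations. The cleanest way to organize this is to pass to an $(n+1)$-column description: record the positions of \emph{all} $n+1$ pieces (including piece $0$) as a $d\times(n+1)$ matrix $\tilde P = (0 \mid P)$, and observe that now piece $0$ may jump over piece $j\ge 1$ exactly as an ordinary piece would. A jump of any piece over another still acts by an $(n+1)\times(n+1)$ elementary-involution-type matrix on $\tilde P$. The key structural point: translating the whole configuration by a vector $t$ is the identity move on shape, and the subgroup of the big group that fixes the ``shape'' (i.e. acts trivially on differences of columns) acts by translations. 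So I would argue that the reachable configurations are precisely the translates $P' + t$ where $P'$ is reachable with piece $0$ fixed and $t$ lies in a certain lattice $L$ of admissible translation vectors; then I must identify $L = \{2Pw : w \in \mathbf{Z}^n\}$.

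\medskip

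\textbf{Identifying the translation lattice.} For the inclusion $L \supseteq \{2Pw\}$: it suffices to realize, for each standard basis vector $e_i$, the translation by $2Pe_i$ (the $i$-th column of $2P$) by a sequence of legal moves that returns every piece to its original position \emph{except} for a net shift — actually the cleaner target is to show we can shift the whole configuration by $2v_i$, where $v_i$ is the $i$-th column of $P$. Concretely, have the stationary piece jump over piece $i$ (moving $0 \mapsto 2v_i$), then restore the relative configuration using Part~1-type moves among the ordinary pieces; one checks this nets a global translation by $2v_i$. Iterating and combining (moves are reversible, so we get the full lattice generated by the $2v_i$, which is $\{2Pw : w\in\mathbf{Z}^n\}$) gives one direction. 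For the reverse inclusion $L \subseteq \{2Pw\}$: work modulo $2P\mathbf{Z}^n$, i.e. reduce all coordinates ``mod the lattice $2P\mathbf{Z}^n$,'' and check that \emph{every} elementary involution (including those where piece $0$ moves) preserves, modulo $2P\mathbf{Z}^n$, the multiset of piece positions relative to, say, the stationary piece's original location — equivalently, show the barycenter-like invariant $\sum(\text{positions}) \bmod 2P\mathbf{Z}^n$, or better the position of piece $0$ itself mod $2P\mathbf{Z}^n$, is unchanged by any single move. A jump of piece $i$ over piece $j$ changes position $i$ by $\pm 2(v_j - v_i)$, which is an even integer combination of columns of $P$ only once we know all current positions lie in $P\mathbf{Z}^n + (\text{translates})$; the honest check is a mod-$2$ / lattice argument tracking that the stationary piece's coordinate stays in its coset of $2P\mathbf{Z}^n$.

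\medskip

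\textbf{Main obstacle.} The bulk of Part~2's difficulty is precisely this last point: showing that no sequence of legal moves can translate the configuration by a vector outside $2P\mathbf{Z}^n$. The subtlety is that intermediate configurations need not be translates of the original, so one cannot argue naively; the right move is to find the conserved quantity — the class of (the stationary piece's position) in $\mathbf{R}^d / 2P\mathbf{Z}^n$, or equivalently to work in the quotient from the start — and verify it is invariant under each elementary involution by a direct (short) computation, then combine with Part~1 to pin down the reachable set exactly. I would also double-check the edge cases where $P$ has rank $<d$ or $<n$ (the lattice $2P\mathbf{Z}^n$ may be lower-dimensional), since the statement as phrased makes sense verbatim in those cases.
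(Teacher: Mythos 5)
Your Part~1 matches the paper, and two of the three ingredients of Part~2 are essentially the paper's: the realization of the translations by $2Pw$ (your ``special piece jumps over $i$, then each ordinary piece jumps over $i$ and then over the special piece'' is the paper's sequence $S_A$ in a slightly different order), and the conserved quantity for the converse (all positions stay in the subgroup $L=P\mathbf{Z}^n$, every jump displaces a piece by an element of $2L$, so the special piece stays in $2L$).

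The genuine gap is the middle claim: that \emph{every} reachable configuration is a translate of one reachable while keeping the special piece fixed. You assert this via the ``key structural point'' that the shape-stabilizer of the big group acts by translations, but that statement is about the stabilizer, not about the orbit: a priori, letting the special piece jump could produce \emph{shapes} unattainable with it pinned, and your mod-$2L$ invariant only constrains where the special piece ends up, not the shape around it. The paper closes exactly this hole with an insertion trick: after every jump of the special piece over an ordinary piece $A$, insert the sequence $S_A$. Since $S_A$ acts as a pure translation on \emph{any} configuration and its first move is the special piece jumping over $A$, each jump of the special piece is immediately undone, so the modified sequence is (a product of translations by vectors in $2L$) composed with a sequence of ordinary jumps only --- which is precisely the desired decomposition. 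Your alternative suggestion of analyzing the full $(n+1)\times(n+1)$ matrix group directly could also be made to work (its elements preserve column sums, hence affine combinations), but you would then have to characterize that group and relate it back to the $n\times n$ one, which you do not carry out. Either the insertion argument or that group-theoretic analysis must be supplied; as written, the reduction step is missing.
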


\begin{proof}
Part~1 is immediate from the observation made in the previous section, according to which the effect of a legal jump by an ordinary piece on the matrix associated with the configuration is exactly a multiplication by an elementary involution.
\smallskip

For the proof of part~2, call the piece originally sitting at the origin \emph{special}. (We can no longer call it stationary.) All other pieces are ordinary. Let us define for any ordinary piece $A$ a sequence $S_A$ consisting of $2n$ legal jumps as follows. Let $S_A$ start with the moves in which all other pieces (including the special piece) jump over $A$, and continue with the moves where all ordinary pieces jumps over the special piece. Notice that if we apply the sequence $S_A$ to any configuration, all pieces will move with the same vector: twice the vector from the special piece to $A$ (either before or after the execution of $S_A$). Applying the same jumps in reverse order, we can move all pieces with the negative of this vector. This means that applying several of these sequences right at the start, we can move all the pieces with $2Pw$, for any integer vector $w$. We can thereafter keep the special piece stationary and reach the translate of any configuration that was reachable in part~1. This proves that all configurations described in part~2 of the theorem are indeed reachable by a sequence of legal moves.
\smallskip

Next we show that any configuration reachable by legal moves is a translate of a configuration reached by legal moves keeping the special piece stationary. Consider an arbitrary sequence of legal moves reaching a configuration $X$. For every ordinary piece $A$, let us insert the sequence $S_A$ of jumps after every time the special piece jumps over $A$. Clearly, these inserted sub-sequences just act as translations, so we obtain a longer sequence of legal moves resulting in a translate of $X$. However, as $S_A$ starts with the special piece jumping over $A$, this longer sequence will have all jumps of the special piece reversed right away, so the same configuration can also be reached canceling these jumps and keeping the special piece stationary, as claimed.
\smallskip

It remains to prove that the translation vector, i.e., the position of the special piece in a configuration reached by legal moves, is of the form $2Pw$ for an integer vector $w$. For this, notice that the pieces remain in the additive subgroup of the space generated by the original positions of the pieces. This subgroup is $L=\{Pw\mid w\hbox{ integer}\}$. It may be a discrete lattice but may even be dense in the entire space. Whenever any piece jumps, it moves with twice a vector from $L$, so the special piece, which starts at the origin, must end up in $2L=\{2Pw\mid w\hbox{ integer}\}$, as claimed.
\end{proof}

\section{Proof of main theorems.}\label{sec4new}

Theorem~\ref{hasonlo} provides a simpler condition than Lemma~\ref{altalanos} for the special case where the final configuration is a similar but larger copy of the initial one. A configuration is similar to but larger than the one identified with the matrix $P$ if and only if its matrix is $\lambda SP$, where $|\lambda|>1$ and $S$ is a $d\times d$ \emph{orthogonal} matrix.

\medskip

\textbf{Proof of Theorem~\ref{hasonlo}:}
The proof of this result would be an immediate consequence of Lemma~\ref{altalanos} and Lemma~\ref{matrix} if we had also required that the matrix $A$ satisfies the parity condition, i.e., $A$ is congruent to the identity matrix $I$ modulo 2. The main observation here is that this requirement can be dropped.
\smallskip

To see this, assume that $A$ satisfies the conditions of Theorem~\ref{hasonlo}. We have $|\det A|=1$, but as the parity condition is dropped, $A$ is not necessarily generated by elementary involutions.
\smallskip

Consider the matrix $A$ modulo 2. Its determinant is 1, so it is an element of the $n\times n$ \emph{matrix group} $GL(n,2)$. As $GL(n,2)$ is a \emph{finite} group, $A$ must have a finite order $t\ge1$, for which $A^t$ agrees with the identity matrix modulo 2. We also have $|\det(A^t)|=|\det A|^t=1$. Hence, $A^t$ satisfies both conditions in Lemma~\ref{matrix} and is, therefore, generated by elementary involutions. Hence, by Lemma~\ref{altalanos}, $PA^t$ is reachable from the original configuration $P$ by legal jumps (leaving the special piece stationary). By our assumption on $A$, we have $PA=\lambda SP$, for some $|\lambda|>1$ and some orthogonal matrix $S$. This implies that $PA^t=\lambda^tS^tP$, which means that the configuration $PA^t$ is similar to but larger than $P$. This completes the proof.
\hfill $\Box$
\medskip

Now we return to the original question addressed in our paper: to the case of regular polygons.
\medskip

\textbf{Proof of Theorem~\ref{main}:}
Let $N=n+1$ and let $p_0, p_1, p_2,\ldots,p_{n}\in{\mathbf{R}}^2$ denote the vertices of a regular $N$-gon in the plane, enumerated in positive (counter-clockwise) cyclic order, and suppose that $p_0=0$. The initial position with the $N$ pieces at these vertices is identified with the $2\times n$ matrix $P$, the $i$\/th column of which is $p_i$, for all $1\le i\le n$.
\bigskip

\begin{tikzpicture}[scale=.03]
\hskip2.5cm
	\node[circle, fill=black](p0) at (-59,-81) {};
	\node[circle, fill=black](p1) at (59,-81) {};
	\node[circle, fill=black,label={$p_2$}](p2) at (95,31) {};
	\node[circle, fill=black,label={$p_3$}](p3) at (0,100) {};
	\node[circle, fill=black,label={$p_4$}](p4) at (-95,31) {};
	\draw[-latex,color=black,thick] (p0) -- (p2) node[midway, above left]{$p_2$};
	\draw[-latex,color=black,thick] (p1) -- (p3) node[midway, above left]{$p_3-p_1$};
	\draw[-latex,color=black,thick] (61,7) arc[radius=30,start angle=36, end angle =108];
	\node at (42,5) {$\frac{2\pi}5$};
	\node at (-65,-65) {$0=p_0$};
	\node at (66,-65) {$p_1$};
	
\end{tikzpicture}
\bigskip
\medskip

\centerline{\small{\textbf{Figure 1:} Illustration to the proof of Theorem~\ref{main}.}}

\bigskip
\medskip

Let $M$ denote the $n\times n$ matrix
$$M = \begin{bmatrix}
    -1 & -1 &  \cdots & -1 & -1 \\
    1 & 0 & \cdots &  0 & 0 \\
    0 & 1 & \cdots & 0 & 0 \\
    \vdots & \vdots &  \ddots & \vdots & \vdots \\
    0 & 0 &  \cdots  & 1 & 0
    \end{bmatrix}.$$
Note that the $i$\/th column of $PM$ is $p_{i+1}-p_1$ for $1\le i<n$ and $-p_1$ for $i=n$ and thus can be obtained as the rotation of $p_i$ with the positive angle $2\pi/N$ (see Figure 1). Let
$$S=\begin{bmatrix}
\cos\frac{2\pi}{n+1}&-\sin\frac{2\pi}{n+1}\\
&\\
\sin\frac{2\pi}{n+1}&\cos\frac{2\pi}{n+1}
\end{bmatrix}$$
be the orthogonal matrix representing this rotation. Now we have $PM=SP$.
\smallskip

At this point, we are almost done. We have $|\det M|=1$, and the position $PM$ is similar to $P$, that is, it is also a regular $(n+1)$-gon. If it was also larger than $P$, we could apply Theorem~\ref{hasonlo} and finish the proof. Unfortunately, the configuration $PM=SP$ is of exactly the same size as $P$, because $S$ is a rotation.
\smallskip

We solve this problem by considering polynomials $f(M)$ of $M$. Obviously, we have $Pf(M)=f(S)P$. Notice that all polynomials $f(S)$ of the matrix $S$ are of the form
$$\begin{bmatrix}
a&-b\\
b&a
\end{bmatrix}=\lambda T,$$
for some reals $a,b,\lambda$ and for an orthogonal matrix $T$. So, the configuration $Pf(M)$ is similar to $P$, i.e., it is a (possibly degenerate) regular $(n+1)$-gon. We can finish the proof by applying Theorem~\ref{hasonlo} if we find an integer polynomial $f$ such that
\begin{itemize}
\item[(i)] $|\det f(M)|=1$, and
\item[(ii)] the regular $(n+1)$-gon identified with $Pf(M)$ is larger than $P$.
\end{itemize}

For a positive integer $k$, consider the matrix
$$B_k=\sum_{j=0}^{k-1}M^j.$$
Note that the characteristic polynomial of $M$ is $\sum_{j=0}^{n}t^j$. Recall that $N=n+1$. Thus, we have $B_N=0$. We further have
$$B_{kN}=B_N\sum_{j=0}^{k-1}M^{jN}=0,$$
for all $k>0$, and
$$B_{kN+1}=I+MB_{kN}=I.$$
\smallskip

Let us fix an integer $i$ with $1 \le i<N$ which is coprime to $N$. We can find positive integers $l$ and $k$ such that $il=kN+1$, and
$$B_i\sum_{j=0}^{l-1}M^{ij}=B_{il}=B_{kN+1}=I.$$
This shows that the integer matrix $B_i$ is invertible and its inverse is also an integer matrix. Therefore, we have $|\det B_i|=1$. This was condition (i) above that a polynomial of $M$ had to satisfy to finish the proof.
\smallskip

It remains to verify condition (ii) and check that the configuration identified with $PB_i$ (a regular $N$-gon) is strictly larger than the starting configuration. The side length of the $N$-gon associated with the configuration $PB_i$ is the distance between $p_1B_i$, the position of piece 1, and the special piece at the origin. Here, $p_1M^j=p_{j+1}-p_j$ for $0\le j<N-1$, so that
$$p_1B_i=\sum_{j=0}^{i-1}p_1M^j=\sum_{j=0}^{i-1}(p_{j+1}-p_j)=p_i.$$
Therefore, the side length of the regular $(n+1)$-gon corresponding to the configuration $PB_i$ is equal to $|p_i|$. This is the length of a proper diagonal of the original configuration $P$ provided that $1<i<N-1$, so strictly larger than original side length of $|p_1|$.
\smallskip

To complete the proof of Theorem~\ref{main}, it is enough to observe that there always exists an integer $i$ with  $1<i<N-1,$ which is coprime to $N$, provided that $N\ge5$ and $N\ne6$.  \hfill   $\Box$

\section{Remarks and further questions.}\label{remarks}

In the present note, we defined a game with grasshoppers jumping over each other and discussed when a configuration of the grasshoppers \emph{can be reached} from another configuration but we neglected the question of \emph{how many jumps} it takes to reach the desired configuration. Here, we list several problems related to this latter question.
\medskip

\noindent
\textbf{A. } The smallest special case of Theorem~\ref{main} is that of the regular pentagon. One can implement the steps of the proof above to find a concrete sequence of jumps for the grasshoppers starting at the vertices of a regular pentagon that yields a larger regular pentagon, but the sequence so obtained is very long. Here we give a much shorter sequence meeting the requirements. We number the grasshoppers $0$ through $4$ and denote by $J_{ij}$ the jump of grasshopper $i$ over grasshopper $j$. The following sequence of jumps results in a regular pentagon that is $\sqrt5+2$ times larger than the original:
$$J_{40}J_{43}J_{20}J_{32}J_{41}J_{12}J_{13}J_{31}J_{23}J_{13}J_{30}J_{24}J_{14}J_{40}$$

We do not know whether there exists an even shorter sequence than this.
\medskip

\noindent
\textbf{B. } The same question can be asked more generally:

\begin{problem}\label{numberofmoves}
Given an integer $N$ which is either $5$ or larger than $6$, and $N$ pieces sitting at the vertices of a regular $N$-gon, what is the smallest number of legal moves that takes them into the vertex set of a larger regular $N$-gon?

How many legal moves are needed to turn a configuration consisting of the vertices and the center of regular $N$-gon to a similar but larger configuration? (A finite number of moves always suffice.)
\end{problem}

We can turn to the proof of Theorem~\ref{main} to obtain an explicit bound, but this bound will be unreasonably large in $N$ for two reasons. First, the application of Lemma~\ref{matrix} may itself yield exponentially long sequences; see more on this below. Second, in the proof of Theorem~\ref{hasonlo} we take an integer matrix $A$ and raise it to a power which is the order of $A$ in $GL(n,2)$. But $GL(n,2)$ has elements of order $2^n-1$.

\begin{conjecture} Every regular $N$-gon ($N>6$) can be transformed to a similar but larger configuration in polynomially many moves in $N$.
\end{conjecture}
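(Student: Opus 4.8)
\medskip
Toward this conjecture we outline a possible strategy; the principal difficulty is identified at the end.

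\emph{Reduction to a short certificate.} By Lemmas~\ref{altalanos} and~\ref{matrix}, it suffices to produce an $n\times n$ integer matrix $A$ with $|\det A|=1$ and $A\equiv I\pmod 2$ such that $PA$ is a strictly larger regular $N$-gon, \emph{together with} a decomposition of $A$ into a product of $\mathrm{poly}(N)$ elementary involutions. As observed in the proof of Lemma~\ref{matrix}, each admissible column operation (negating a column, or adding or subtracting twice one column to another) is realized by at most two jumps, so it is enough to transform $I$ into $A$ by $\mathrm{poly}(N)$ admissible column operations. Identifying $\mathbf{R}^2$ with $\mathbf{C}$ and writing $\zeta=e^{2\pi i/N}$, we look for $A$ of the form $A=f(M)$ with $f\in\mathbf{Z}[t]$, where $M$ is the matrix from the proof of Theorem~\ref{main}; then $Pf(M)=f(S)P$ is automatically a (possibly degenerate) regular $N$-gon, namely $P$ scaled by the factor $|f(\zeta)|$.

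\emph{Size is easy; parity is the issue.} Conditions (i)--(ii) in the proof of Theorem~\ref{main}, i.e.\ $|\det f(M)|=1$ and $|f(\zeta)|>1$, are already met by $f=B_i:=\sum_{j=0}^{i-1}M^j$ whenever $\gcd(i,N)=1$ and $1<i<N-1$; what $B_i$ lacks is the parity condition $B_i\equiv I\pmod 2$. The proof of Theorem~\ref{hasonlo} restores parity by passing from $A$ to $A^t$, where $t$ is the order of $\overline A$ in $GL(n,2)$; but this order can be as large as $2^n-1$, and the entries of $A^t$ --- hence the length of its admissible decomposition --- grow accordingly. The plan is instead to look \emph{directly} for a low-complexity integer polynomial $f$ with $f(M)\equiv I\pmod 2$, $|\det f(M)|=1$ and $|f(\zeta)|>1$. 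In the commutative ring $\mathbf{Z}[M]$ (which equals $\mathbf{Z}[\zeta]$ when $N$ is prime and is a product of cyclotomic rings in general), this asks for a unit that is congruent to $1$ modulo $2$, has large archimedean absolute value, and admits a polynomially bounded representative; the rich supply of cyclotomic units, which do have small representatives, makes this plausible. A concrete family worth trying is $A=B_i\, g(M)$, where $g(M)$ has determinant $\pm1$ and $|g(\zeta)|\ge1$ --- for instance a power of $M$, a second $B_{i'}$, or a short product of such --- chosen so that $\overline{g(M)}=\overline{B_i}^{\,-1}$ in $GL(n,2)$; equivalently, one wants to write $\overline{B_i}^{\,-1}$ as a \emph{short} word in the reductions $\overline{M}^{\pm1}$ and $\overline{B_{i'}}$ with $i'$ small and coprime to $N$.

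\emph{The main obstacle.} The crux is to control the parity condition in polynomial size for \emph{every} $N$, and it splits into two parts. (a) \emph{Existence of a low-complexity unit $\equiv 1\pmod 2$.} The subgroup of units of $\mathbf{Z}[M]$ congruent to $1$ modulo $2$ has index dividing $\big|(\mathbf{Z}[M]/2)^{\times}\big|$, which is exponential in $N$ whenever $r:=\mathrm{ord}_N 2$ is large --- as it is for many primes $N$ --- so merely raising a given unit such as $B_i$ to a power that lands in this subgroup already costs an exponential exponent. Locating a unit of small complexity that lies inside, or within a short word of, this subgroup appears to require a genuinely new idea; we expect this to be \emph{the} hard step, and it may well force different choices of the target similarity for different arithmetic types of $N$. (b) \emph{Short admissible decomposition.} Given a matrix $A$ with $|\det A|=1$, $A\equiv I\pmod 2$ and polynomially bounded entry size, one must still verify that the reduction in the proof of Lemma~\ref{matrix}, carried out in a Euclidean fashion and exploiting that $A$ belongs to the small ring $\mathbf{Z}[M]$, terminates after $\mathrm{poly}(N)$ admissible operations with all intermediate entries polynomially bounded; this should be routine but needs to be checked. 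As a first case, the whole plan goes through when $r$ is at most polylogarithmic in $N$: then the order of $\overline{B_i}$ in $GL(n,2)$ divides $2^{r}-1$, which is polynomial in $N$, so a polynomially small power of $B_i$ already satisfies both conditions of Lemma~\ref{matrix}, and only step (b) remains. Settling the conjecture for arbitrary $N$ --- in particular for primes $N$ for which $2$ has large multiplicative order --- is what is left open.
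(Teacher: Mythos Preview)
The statement is a \emph{conjecture} in the paper; the paper offers no proof and explicitly leaves it open. There is therefore no ``paper's own proof'' to compare your proposal against. Your write-up is honest about this: it is labeled as a strategy outline, and it ends by saying the general case ``is what is left open.'' So as a proof of the conjecture it is, by your own admission, incomplete.

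That said, your outline is mathematically sensible and tracks the paper's own diagnosis of the difficulty. You correctly isolate the two sources of exponential blowup that the paper flags in Section~\ref{remarks}B--C: (1) the exponent $t$ used in the proof of Theorem~\ref{hasonlo} can be as large as the order of an element of $GL(n,2)$, and (2) the recursive algorithm in the proof of Lemma~\ref{matrix} may itself produce exponentially long sequences. Your proposed remedy --- search directly inside the commutative ring $\mathbf{Z}[M]$ for a unit congruent to $1$ modulo $2$ with small polynomial representative and $|f(\zeta)|>1$ --- is a natural refinement, and your identification of the index $|(\mathbf{Z}[M]/2)^\times|$ as the obstruction is the right place to look. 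One caveat: your claim that the order of $\overline{B_i}$ in $GL(n,2)$ divides $2^r-1$, with $r=\mathrm{ord}_N 2$, tacitly assumes $N$ is odd (so that $2$ is a unit modulo $N$); for even $N$ the ring $\mathbf{Z}[M]/2$ has nilpotents and the structure is different, so the ``first case'' you describe needs a separate treatment there. Also, even when $r$ is small, the entries of $B_i^{2^r-1}$ may have bit-length polynomial in $N$ but the recursive reduction of Lemma~\ref{matrix} is not obviously polynomial in the bit-length alone --- you flag this as step~(b), but ``should be routine'' is optimistic given the paper's own remark that its bound is exponential.

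In short: there is no gap to name relative to the paper, because the paper proves nothing here; your proposal is a reasonable research plan, not a proof, and you say so.
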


\noindent
\textbf{C. } We can also ask how many legal jumps are needed to bring a given configuration $P$ to another configuration $Q$ whenever this is possible. Here we cannot hope for a bound that depends only on the number $N$ of pieces in the configurations. Lemma~\ref{altalanos} claims that if $P$ can be transformed to $Q$ at all, then $Q=PA$ for an $(N-1)\times(N-1)$ matrix $A$ generated by the elementary involutions. (Here we assume that both $P$ and $Q$ have a stationary piece in the origin and we identify the configurations with matrices as usual.) It would be interesting to bound the number of legal moves needed in terms of $N$ and the maximum absolute value of an entry in $A$. This is the same as asking how long a product of elementary transformations may be needed to obtain a specific matrix with bounded entries.

As before, our proof of Lemma~\ref{altalanos} can be turned into an explicit bound. However, this bound will be exponential. The reason is that the algorithmic proof of Lemma~\ref{matrix}, turning a matrix into the identity matrix by a series of admissible operations, is recursive. We do not know whether this is necessary.
\medskip

\noindent
\textbf{D.} What happens if we are given a single configuration of $N$ pieces which comes with a guarantee that it can be transformed into a similar but larger configuration using legal jumps? Can we bound the number of jumps needed solely in terms of $N$?
\medskip

\noindent
\textbf{E.} And finally a question \emph{not} about the number of necessary jumps.
\begin{problem}
For which Platonic solids $T$ can we start the above game at the vertices of $T$ and arrive at a similar but larger configuration, using legal moves?
\end{problem}
If $T$ is a tetrahedron, a cube or an octahedron, then this is clearly impossible, because all vertices of these solids lie on a cubic lattice. Just like in the case of the olympiad problem mentioned at the beginning of this article, if we consider the smallest similar copy of $T$ whose vertices belong to the integer lattice of side length $1$, we cannot end up with a smaller configuration similar to $T$.  By the reversibility of the moves, neither can we arrive at a similar, but larger configuration.

We conjecture that starting from the vertices of either of the remaining two Platonic solids, the dodecahedron or the icosahedron, one can arrive to a similar but larger configuration by a series of legal moves.
\bigskip

\textbf{Acknowledgement.} We are grateful to Florestan Brunck for communicating this problem to us. The authors, together with him, posed the question on the 2022 Mikl\'os Schweitzer Competition for Hungarian undergraduates and MSc students. This is arguably the most difficult mathematics contest of the world. The participants have 10 days to solve 10 problems~\cite{Sz}.

\end{document}